\renewcommand \a{\alpha}
\renewcommand \b{\beta}
\newcommand \la{\lambda}
\newcommand \ve{\varepsilon}
\newcommand \id{\mathrm{id}}
\newcommand \br{\mathbb{R}}
\newcommand \rk{\operatorname{rk}}
\newcommand \Ker{\operatorname{Ker}}
\newcommand \Span{\operatorname{Span}}
\newcommand \Tr{\operatorname{Tr}}
\newcommand \cJ{\mathcal{J}}
\newcommand \cp{\mathcal{C}}
\newcommand \so{\mathfrak{so}}
\newcommand\ag{\mathfrak a}
\newcommand\g{\mathfrak g}
\newcommand\z{\mathfrak z}
\newcommand\m{\mathfrak m}
\newcommand \ad{\operatorname{ad}}
\newcommand \G{\Gamma}
\newcommand \ri{\mathrm{i}}
\newcommand \<{\langle}
\renewcommand \>{\rangle}
\newcommand \ip{\langle \cdot, \cdot \rangle}
\newcommand \mU{\mathcal{U}}
\theoremstyle{plain}
\newtheorem{theorem}{Theorem}
\newtheorem*{theorem*}{Theorem}
\newtheorem*{corollary*}{Corollary}
\newtheorem*{conj*}{Conjecture}
\newtheorem{lemma}{Lemma}
\newtheorem*{prop*}{Proposition}
\newtheorem*{ET}{Euler Theorem}
\theoremstyle{definition}
\newtheorem*{definition*}{Definition}
\theoremstyle{remark}
\newtheorem{remark}{Remark}
\begin{document}

\title{Stability of geodesic vectors in low-dimensional Lie algebras}

\author{An Ky Nguyen}
\address{\! Department of Mathematical and Physical Sciences, La Trobe University, Melbourne, Australia 3086}
\email{19042053@students.ltu.edu.au}

\author{Yuri Nikolayevsky}
\email{y.nikolayevsky@latrobe.edu.au}

\thanks{The second author was partially supported by ARC Discovery Grant DP210100951.}

\subjclass[2020]{53C30, 37D40, 34D20} 

\keywords{geodesic vector, Lie algebra, Lyapunov stability}

\begin{abstract}
A naturally parameterised curve in a Lie group with a left invariant metric is a geodesic, if its tangent vector left-translated to the identity satisfies the Euler equation $\dot{Y}=\ad^t_YY$ on the Lie algebra $\g$ of $G$. Stationary points (equilibria) of the Euler equation are called geodesic vectors: the geodesic starting at the identity in the direction of a geodesic vector is a one-parameter subgroup of $G$. We give a complete classification of Lyapunov stable and unstable geodesic vectors for metric Lie algebras of dimension $3$ and for unimodular metric Lie algebras of dimension $4$.
\end{abstract}

\maketitle

\section{Introduction}
\label{s:intro}

Let $(\g,\ip)$ be a metric Lie algebra and $G$ its connected, simply connected Lie group equipped with the left-invariant Riemannian metric defined by $\ip$. Let $\gamma(t)$ be a smooth curve on $G$. Define its \emph{hodograph} to be the curve $Y(t)=(dL_{\gamma(t)^{-1}})\dot{\gamma}(t)$ in $\g$ (obtained by the left translation of the velocity vector to the identity).

The following characterisation of geodesics of left-invariant metrics is well known (e.g. \cite[Section~3]{A1}).
\begin{ET} A curve $\gamma(t)$ is an affinely parameterised geodesic on $G$ if and only if
\begin{equation}\label{eq:de}
\dot{Y}=\ad^t_YY,
\end{equation}
where $\ad^t_Y$ is the metric adjoint to $\ad_Y$.
\end{ET}
A point $X \in \g$ is a stationary point of equation~\eqref{eq:de}, if $\<X,[X,Y]\>=0$, for all $Y \in \g$. A stationary point $X \ne 0$ is called a \emph{geodesic vector}: the exponent of $\br X$ is a \emph{homogeneous geodesic}, a geodesic which is at the same a subgroup of $G$. Any metric Lie algebra (respectively, any metric Lie group) admits a geodesic vector (respectively, a homogeneous geodesic) \cite{Kai}. Similar result for homogeneous spaces has been established in \cite{KS}; for the current state of knowledge in the theory the reader is referred to the survey \cite{Dus} and the bibliography therein. Homogeneous spaces for which any nonzero tangent vector is geodesic are known as \emph{geodesic orbit spaces}. For a modern, comprehensive introduction to the theory the reader is referred to \cite{BN}.

From a dynamical point of view, geodesic vectors are equilibria of the equation~\eqref{eq:de}. The study of stability of such equilibria for various classes of metric Lie algebras is an ongoing project of D.\,Alekseevsky and the second author (a paper is currently under preparation). Recall that a stationary point $X$ is called (Lyapunov) \emph{stable}, if any solution $Y(t)$ of \eqref{eq:de} starting close to $X$ remains close to $X$ for all $t \ge 0$, and is called \emph{unstable} otherwise (note that any solution is defined for all $t \in \br$ as $G$ is complete). Clearly, the function $I_0(Y)=\|Y\|^2$ is a first integral of~\eqref{eq:de} (twice the energy), and so one can effectively study \eqref{eq:de} on the unit sphere of $(\g,\ip)$.

It is important to emphasise the connection with and the difference from the theory of relative equilibria and their stability (see \cite{A2} and the survey paper~\cite{Mon} and references therein). In Hamiltonian dynamics with a Lie group symmetry, a \emph{relative equilibrium} is a trajectory which is an orbit of a one-parameter subgroup. A relative equilibrium is called (Lyapunov) stable if all the trajectories starting close to it remain close for all positive times. In our context, relative equilibria are exponents of the geodesic vectors, that is, homogeneous geodesics on the metric Lie group $G$. Although a geodesic vector may be stable as a stationary point of~\eqref{eq:de}, the corresponding homogeneous geodesic does not have to be a stable relative equilibrium. The simplest nontrivial example is the 3-dimensional metric Lie algebra whose brackets relative to an orthonormal basis are given by $[e_1,e_2]=e_2,\, [e_1,e_3]=e_3$. The stationary point $-e_1$ is stable (see Theorem~\ref{t:3}\eqref{it:3X}\eqref{it:3s2J>0>s1J} or the last paragraph of Section~\ref{s:3}), but the corresponding metric Lie group is isometric to the hyperbolic space, and so any two geodesics starting at the same point diverge.

In this paper, we give a complete characterisation of stability of geodesic vectors in metric Lie algebras of dimension~3 and in unimodular metric Lie algebras of dimension~4. Denote $\cJ_X$ the linearisation of the right-hand side of \eqref{eq:de} at $X \in \g$, so that $\cJ_XY= \ad^t_XY + \ad^t_YX$ (see Section~\ref{ss:lin} for more details). Let $\sigma_k(\cJ_X), \; k=1,\dots,n$, where $n =\dim \g$, be the $k$-th symmetric function of the eigenvalues of $\cJ_X$ (so that $\det(\cJ_X - \la \, \id_\g)=(-\la)^n+\sum_{k=1}^{n} (-\la)^{n-k} \sigma_k(\cJ_X)$).

Our main results are as follows.
\begin{theorem} \label{t:3}
  Let $(\g,\ip)$ be a metric Lie algebra of dimension $3$.
  \begin{enumerate}[label=\emph{(\Alph*)},ref=\Alph*]
    \item \label{it:3X}
    A stationary point $X \in \g$ of equation~\eqref{eq:de} is stable if and only if one of the following conditions is satisfied:
        \begin{enumerate}[label=\emph{(\roman*)},ref=\roman*]
            \item \label{it:3J0}
            $\cJ_X=0$;

            \item \label{it:3s2J>0>s1J}
            $\sigma_2(\cJ_X) \ge 0 \ge \sigma_1(\cJ_X)$, with at least one of the two inequalities being strict.
        \end{enumerate}

    \item \label{it:3exist}
    Equation~\eqref{eq:de} always has a nonzero stable stationary point, and has an unstable stationary point unless either $\g$ is abelian or $\g = \so(3)$ and $\ip$ is a bi-invariant inner product.
  \end{enumerate}
\end{theorem}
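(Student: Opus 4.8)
The plan is to first reduce the spectral problem and then dispatch the cases according to the signs of $\sigma_1$ and $\sigma_2$, treating the non-hyperbolic cases separately. The starting observation is that the first integral $I_0(Y)=\|Y\|^2$ forces structure on $\cJ_X$. Differentiating $I_0$ along the flow gives $\langle \ad^t_Y Y, Y\rangle=0$, and linearising this at the equilibrium $X$ yields $\langle \cJ_X Z, X\rangle = 0$ for all $Z$; together with $\cJ_X X = \ad^t_X X + \ad^t_X X = 0$ this shows that $0$ is an eigenvalue of $\cJ_X$ with eigenvector $X$, that the image of $\cJ_X$ lies in $X^\perp$, and hence that $\cJ_X$ descends to an operator $A:=\cJ_X|_{X^\perp}$ on the $2$-dimensional space $X^\perp$. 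Consequently $\sigma_3(\cJ_X)=\det\cJ_X=0$, while $\sigma_1(\cJ_X)=\Tr A$ and $\sigma_2(\cJ_X)=\det A$. A direct trace computation moreover gives $\sigma_1(\cJ_X)=\Tr\ad_X=\langle \xi, X\rangle$, where $\xi$ is the unimodularity (mean curvature) vector defined by $\langle \xi, Z\rangle = \Tr\ad_Z$; in particular $\sigma_1\equiv 0$ whenever $\g$ is unimodular. Finally, since $I_0$ is conserved, a solution starting near $X$ stays on the sphere $\|Y\|=\|X\|$, so Lyapunov stability of $X$ in $\g$ is equivalent to its stability for the restricted flow on the $2$-sphere $S$ of radius $\|X\|$, whose linearisation at $X$ is exactly $A$.

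The decisive (hyperbolic) cases now follow from standard stability theory applied to $A$ on $S$. If $A$ has an eigenvalue with positive real part then $X$ is unstable; this covers $\sigma_2<0$ (real eigenvalues of opposite sign) and all of $\sigma_1>0$. If both eigenvalues of $A$ have negative real part, i.e. $\sigma_1<0<\sigma_2$, then $X$ is asymptotically stable on $S$, hence stable. This already proves the criterion except on the boundary locus where $A$ has an eigenvalue on the imaginary axis, namely the cases (a) $\sigma_1=0,\ \sigma_2>0$ (a linear centre), (b) $\sigma_1<0,\ \sigma_2=0$, (c) $\sigma_1=\sigma_2=0$ with $\cJ_X\ne 0$ ($A$ a nonzero nilpotent), and (d) $\cJ_X=0$.

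The genuinely nonlinear cases (a)--(d) are the heart of the matter and where I expect the main difficulty. The plan is to exploit the explicit classification of $3$-dimensional metric Lie algebras (a Milnor-type orthonormal normal form for the unimodular algebras, where $[e_2,e_3]=\lambda_1 e_1$ etc., and the corresponding normal form for the non-unimodular ones), reducing to finitely many families. In the unimodular case $\sigma_1\equiv 0$ and the Euler equation carries a second quadratic first integral $I_1$ in addition to $I_0$, so the trajectories on $S$ are the level curves of $I_1|_S$: at a linear centre $X$ is a nondegenerate extremum of $I_1|_S$, which then serves as a Lyapunov function proving stability in case (a), while in the nilpotent case (c) the two integrals are degenerately tangent and a Chetaev function certifies instability. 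For the non-unimodular families one argues directly from the normal form: case (b) produces a partially attracting direction whose neutral complement does not drift, case (c) again yields instability via a Chetaev function, and case (d), which contains the abelian algebras and the bi-invariant $\so(3)$ (where $\ad^t_Y=-\ad_Y$ forces $\dot Y\equiv 0$), is handled by showing the flow near $X$ is stable, e.g. by exhibiting nearby equilibria or a suitable Lyapunov function. Assembling cases (a)--(d) with the hyperbolic ones yields exactly conditions \eqref{it:3J0}--\eqref{it:3s2J>0>s1J}, proving part \eqref{it:3X}.

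Part \eqref{it:3exist} is then obtained by running through the same classification. In each family one exhibits an explicit nonzero geodesic vector — typically a unit vector along a distinguished axis of the normal form — and checks that its pair $(\sigma_1,\sigma_2)$ satisfies \eqref{it:3J0} or \eqref{it:3s2J>0>s1J}, producing a stable stationary point; a Poincaré--Hopf count on $S$ (the indices summing to $\chi(S)=2$) serves as a useful consistency check. For the unstable point one locates a saddle ($\sigma_2<0$) or a source/nilpotent direction in every family except the two exceptions: if $\g$ is abelian the Euler equation is $\dot Y=0$ and every point is stable, and if $\g=\so(3)$ with a bi-invariant inner product every vector is geodesic with $A$ a centre, so again no unstable stationary point exists. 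The main obstacle throughout is the verification of the non-hyperbolic cases in part \eqref{it:3X}, since there linearisation is silent and one must produce, uniformly across the non-unimodular families, either a first integral or a Chetaev function deciding stability.
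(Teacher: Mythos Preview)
Your overall strategy coincides with the paper's: reduce to the two-dimensional operator $\cJ_X|_{X^\perp}$ via the first integral $I_0$, dispose of the hyperbolic cases by linear stability, and handle the remaining borderline cases using the Milnor normal form (unimodular) and the codimension-one abelian ideal (non-unimodular). In the unimodular case your plan matches the paper exactly: the second quadratic integral $I_1(Y)=\sum_i\lambda_i y_i^2$ decides everything, and the nilpotent case (c) is disposed of by noting that the trajectories are circles in the planes $y_1=c$, so no Chetaev function is needed.

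The one place where the paper's argument is substantially more concrete than your sketch is the non-hyperbolic \emph{non-unimodular} cases. Rather than invoking an unspecified Chetaev function or a vague ``partially attracting direction whose neutral complement does not drift'', the paper uses the coadjoint orbit foliation: for $Y(0)=y_1^0e_1+y^0$ with $y^0\ne 0$, the trajectory lies on the cylinder $(w,s)\mapsto we_1+\exp(sA^t)y^0$, and restricting $I_0$ to this cylinder gives $F(w,s)=w^2+\phi_x(s)$ with $\phi_x(s)=\|\exp(sA^t)x\|^2$. One then reads off stability from the leading nonconstant term of the Taylor expansion of $\phi_x$; in the borderline case $\sigma_1=\sigma_2=0$, $A^tx\ne 0$, an explicit computation shows the cubic term is nonzero, and a time-reversal symmetry $(y_1,t)\leftrightarrow(-y_1,-t)$ of the system converts the resulting pair of half-branches into one trajectory leaving $X$ and one approaching it, giving instability. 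Your case (b) with $\sigma_1<0=\sigma_2$ only arises when $A^tx=0$, which forces $\det A=0$ and produces an extra linear first integral $y_3$; the trajectories are then circles and the tangent direction is a genuine eigenvector with eigenvalue $\sigma_1<0$, which is the precise version of your heuristic. Your case (d) in the non-unimodular setting is not ``abelian or bi-invariant $\so(3)$'' (those are unimodular) but rather $x\in\Ker A\cap\Ker A^t$, i.e.\ $x$ central, where stability follows from the first integral $\langle X,Y\rangle$ as in Lemma~\ref{l:lin}\eqref{it:cJskew}. With these refinements your plan goes through and agrees with the paper; the coadjoint-orbit/$\phi_x$ device is the main idea you are missing.
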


\begin{theorem} \label{t:4uni}
  Let $(\g,\ip)$ be a unimodular, metric Lie algebra of dimension $4$, and let $\z \subset \g$ be its centre.
  \begin{enumerate}[label=\emph{(\Alph*)},ref=\Alph*]
    \item \label{it:X}
    Let $X \in \g$ be a stationary point of equation~\eqref{eq:de}.
    \begin{enumerate}[label=\emph{(\alph*)},ref=\alph*]
        \item \label{it:tz>1}
        If $\dim \z > 1$, then $X$ is stable if and only if $X \in \z$.

        \item \label{it:tz=1}
        If $\dim \z = 1$, then $X$ is stable if and only if one of the following conditions is satisfied:
        \begin{enumerate}[label=\emph{(\roman*)},ref=\roman*]
            \item \label{it:J0}
            $\cJ_X=0$;

            \item \label{it:s2J>0}
            $\sigma_2(\cJ_X)>0$;

            \item \label{it:Jnilp}
            $\rk \cJ_X = 1$ and $\<X,Z\>\Tr (\cJ_X \cJ_Z) < 0$, where $Z \in \z \setminus\{0\}$.
        \end{enumerate}

        \item \label{it:tz=0}
        If $\z$ is trivial, then $\g$ has an abelian ideal $\ag$ of dimension $3$. Take an arbitrary $e \in \g \setminus \ag$ and denote $A=(\ad_e)_{|\ag}$. Then $X$ is stable if and only if $X \in \ag$ and either $X = 0$ or in the expansion $\|\exp(sA^t)X\|^2=\|X\|^2 + a_k s^k + o(s^k)$, where $k > 0$ and $a_k \ne 0$, we have $k$ even and $a_k > 0$.
    \end{enumerate}

    \item \label{it:exist}
    Equation~\eqref{eq:de} always has a nonzero stable stationary point, and has an unstable stationary point unless either $\g$ is abelian or $\g = \so(3) \oplus \br$ and $\ip$ is a bi-invariant inner product.
  \end{enumerate}
\end{theorem}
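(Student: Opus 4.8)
The plan is to combine a general spectral reduction with the Euler--Arnold (energy--Casimir) viewpoint, and then to run the resulting criteria through the finite classification of $4$-dimensional unimodular Lie algebras, organised by $\dim\z$. First I would record the structural facts holding at every stationary point $X$. Differentiating the first integral $I_0(Y)=\|Y\|^2$ along \eqref{eq:de} gives $\cJ_X^tX=0$, while stationarity gives $\cJ_XX=2\ad^t_XX=0$; hence $X$ is a null vector of both $\cJ_X$ and $\cJ_X^t$, the hyperplane $X^\perp$ is $\cJ_X$-invariant, and $\det\cJ_X=0$ automatically. Unimodularity forces $\Tr\ad_X=0$, so $\sigma_1(\cJ_X)=2\Tr\ad_X=0$ as well. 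Consequently the characteristic polynomial factors as $\la\bigl(\la^3+\sigma_2(\cJ_X)\la-\sigma_3(\cJ_X)\bigr)$, and stability is governed by the traceless cubic $\mu^3+\sigma_2\mu-\sigma_3$, i.e. by $\cJ_X|_{X^\perp}$. I would also note at the outset that, identifying $\g\cong\g^*$ via $\ip$, equation \eqref{eq:de} is the Euler--Arnold equation and its flow preserves every coadjoint orbit; hence, besides $I_0$, all Casimirs of the Lie--Poisson structure are first integrals. This is the tool that produces stability.

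The second step is the instability half, via linearisation. Since the cubic is traceless its three roots sum to zero, so unless all of them lie on the imaginary axis at least one has positive real part, and Lyapunov's first method (or a Chetaev function) makes $X$ unstable. Inspecting the discriminant $-4\sigma_2^3-27\sigma_3^2$, one finds that all three roots are purely imaginary precisely when $\sigma_3=0$ and $\sigma_2\ge0$ (roots $0,\pm i\sqrt{\sigma_2}$); if $\sigma_2<0$ or $\sigma_3\neq0$, a root with positive real part appears and $X$ is unstable. This pins the spectrally stable regime to $\sigma_3(\cJ_X)=0,\ \sigma_2(\cJ_X)\ge0$ and explains why the criteria in~\eqref{it:X} are phrased through $\sigma_2$ alone: the nondegenerate stable case is exactly $\sigma_2>0$, with $\sigma_2=0$ the degenerate borderline.

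For the stability half I would exploit the Euler--Arnold structure. When $\sigma_2(\cJ_X)>0$ the reduced operator has a simple imaginary pair $\pm i\sqrt{\sigma_2}$ and a double null eigenvalue; here I would produce a Casimir (or a suitable combination of first integrals) for which $X$ is a nondegenerate extremum, so that Arnold's energy--Casimir method---equivalently, confinement of the reduced two-dimensional flow to closed level curves of $I_0$---yields Lyapunov stability. For central $X$ (covering~\eqref{it:tz>1} and~\eqref{it:J0}) one has $\ad_X=0$, so $\cJ_XY=\ad^t_YX$ is skew-symmetric, and stability again follows by confining the flow with $I_0$ and the Casimirs. The genuinely delicate points are the degenerate ones, where $\cJ_X$ is nilpotent and the linearisation is silent. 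In case~\eqref{it:Jnilp}, $\rk\cJ_X=1$ means $\cJ_X$ is a single nilpotent Jordan block ($\sigma_2=\sigma_3=0$); there I would expand the relevant conserved/Lyapunov function to its first nonvanishing order and show that the sign of $\<X,Z\>\Tr(\cJ_X\cJ_Z)$ decides between a strict local extremum (stable) and a saddle producing a Chetaev cone (unstable). In the centreless case~\eqref{it:tz=0} there is no nonconstant Casimir, so instead one uses the structural reduction to the abelian ideal $\ag$, reduces the dynamics near $X\in\ag$ to the linear drift $s\mapsto\exp(sA^t)X$, and reads stability off the leading term of $\|\exp(sA^t)X\|^2$, stability holding exactly when $k$ is even and $a_k>0$.

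Finally, part~\eqref{it:exist} follows by running these criteria over the classification. A nonzero stable vector always exists: a central vector whenever $\dim\z\ge1$ (stable by skew-symmetry of $\cJ_X$), and in the centreless case a suitable $X\in\ag$ with $k$ even and $a_k>0$. For the existence of an unstable vector I would, for each non-exceptional algebra on the list, exhibit a stationary $X$ with $\sigma_2(\cJ_X)<0$ (or the unstable nilpotent sign), and conversely check that in the two exceptional cases---$\g$ abelian and $\so(3)\oplus\br$ with a bi-invariant metric---one has $\ad_X$ skew-symmetric for all $X$, whence $\cJ_X\equiv0$ and no unstable point can occur. I expect the main obstacle to be precisely the degenerate strata $\sigma_2=\sigma_3=0$: there the spectral method is vacuous and one must carry out, uniformly across the classification, the normal-form and higher-order computations underpinning the sharp sign condition in~\eqref{it:Jnilp} and the $a_k$-criterion in~\eqref{it:tz=0}, together with verifying that these quantities are genuinely independent of the chosen basis and of the complementary element $e$.
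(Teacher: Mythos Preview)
Your outline has the right skeleton---spectral obstruction for instability, first integrals/coadjoint orbits for stability---and this is indeed how the paper proceeds. A small slip: $\sigma_1(\cJ_X)=\Tr\ad_X$, not $2\Tr\ad_X$ (only the diagonal of $\ad_X^t$ contributes; the term $Y\mapsto\ad_Y^tX$ has zero trace). But the real issue is that you have correctly located the hard part and then not done it: almost all the content of the theorem lies in the nilpotent stratum $\sigma_2=\sigma_3=0$, and your plan there (``expand to first nonvanishing order'', ``reduce to the linear drift'') is not yet an argument.

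Concretely, three gaps. First, in case~\eqref{it:tz>1} you only argue one direction. The non-central stationary points (e.g.\ $X=x_1e_1$ when $[e_1,e_2]=\a e_3$) have $\cJ_X$ nilpotent of rank~$1$, so your spectral test says nothing; the paper proves instability by exhibiting an explicit circular trajectory starting $\varepsilon$-close to $X$ and leaving. Second, in case~\eqref{it:tz=0} your claim ``there is no nonconstant Casimir'' is off: the coadjoint orbits are the $2$-dimensional cylinders $(w,s)\mapsto we_1+\exp(sA^t)y^0$, and this is exactly what the paper uses. Restricting $I_0$ to the cylinder through $X$ gives $F(w,s)=w^2+\phi_x(s)$, which immediately yields stability when $k$ is even and $a_k>0$. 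For the converse (which you do not address) the paper needs a genuine idea: when $k$ is odd or $a_k<0$, the level set $F=F(0,0)$ near the origin splits into analytic arcs with $w\ne 0$, hence containing no stationary points, so each arc is a single trajectory; the symmetry $(y_1,t)\leftrightarrow(-y_1,-t)$ then forces one of them to \emph{leave} $X$ as $t\to+\infty$. Without this symmetry/trajectory argument you cannot conclude instability from the sign of $a_k$. Third, in case~\eqref{it:tz=1} the paper does not rely on an abstract energy--Casimir principle but writes down the three polynomial integrals $I_0,\ I_1(Y)=y_4,\ I_2(Y)=\<Sy,y\>+2y_4\<l,y\>$, reduces $\Gamma_X=\{I_i=I_i(X)\}$ to the intersection of an ellipsoid $E_x$ with a cone $C_x$ determined by $Q=S-\mu\,\id$, and shows $\sigma_2(\cJ_X)=\|x\|^2\det\psi_x$ where $\psi_x$ is the restriction of $\<Q\cdot,\cdot\>$ to $x^\perp$. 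The borderline case~\eqref{it:Jnilp} is then a concrete computation with $Q$ in a normal form, identifying $\<X,Z\>\Tr(\cJ_X\cJ_Z)$ with $-\a^2\sigma_2(Q)$ and reading off whether $C_x$ degenerates to a line (stable) or to a pair of planes / the whole space (unstable, again via explicit trajectories on $\Gamma_X$). Your ``expand a Lyapunov function and check a sign'' does not by itself explain why that particular trace is the discriminating quantity, nor why the unstable sign actually produces an escaping trajectory.

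For part~\eqref{it:exist}, the paper does not run through a classification table. Existence of stable points in case~\eqref{it:tz=0} comes from a global minimum of $\phi_y$ on a generic coadjoint orbit (using that eigenvalues of $A$ have nonzero real part), and existence of unstable points in case~\eqref{it:tz=1} is obtained by diagonalising $S$ and producing, in each eigenvalue configuration, a stationary $X$ with $\sigma_2(\cJ_X)<0$ or with the unstable sign in~\eqref{it:Jnilp}. Your plan to ``exhibit a stationary $X$ with $\sigma_2(\cJ_X)<0$ for each non-exceptional algebra on the list'' would work, but is more laborious and still requires the nilpotent analysis above for the equal-eigenvalue cases.
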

The sets of stationary points of \eqref{eq:de} for different cases of Theorem~\ref{t:3} and Theorem~\ref{t:4uni} are given in Sections~\ref{s:3} and \ref{s:4uni} respectively (see also \cite{Mar} in the 3-dimensional case).

\begin{remark} \label{r:sigmas}
  It is not hard to give conditions for $\sigma_k(\cJ_X)$ which are necessary for stability (essentially expressing the fact that $\cJ_X$ has no eigenvalues with positive real part); which of them are also sufficient is a much more delicate question. For example, a stationary point of a unimodular algebra may only be stable if all eigenvalues of $\cJ_X$ lie on the imaginary axis (Lemma~\ref{l:lin}\eqref{it:trtr}), and so we can never have exponential stability (and in fact, even asymptotic stability in our cases).
\end{remark}

\begin{remark} \label{r:4tz0}
  Regarding Theorem~\ref{t:4uni}\eqref{it:X}\eqref{it:tz=0}, note that the fact that a unimodular centreless 4-dimensional Lie algebra has a codimension one abelian ideal is well known (see the beginning of Section~\ref{s:4uni}). We also note that the condition on the expansion of $\|\exp(sA^t)X\|^2$ is in fact \emph{finite}: in the worst possible scenario, one needs to compute the first seven terms (up to $s^6$), see Remark~\ref{r:A^n}.
\end{remark}

The paper is partially based on the results of the thesis \cite{Ngu} which the first author carried out under the supervision of Grant Cairns and the second author. The authors would like to thank Grant Cairns for his valuable comments and discussions.

\section{Preliminaries}
\label{s:pre}

Let $(\g,\ip)$ be a metric Lie algebra, and let $\z \subset \g$ be its centre.

\subsection{Linearisation}
\label{ss:lin}

The linearisation $\cJ_X$ of the right-hand side of~\eqref{eq:de} at a point $X \in \g$ is given by $\cJ_XY=\ad_X^tY+\ad_Y^tX$, so that
\begin{equation}\label{eq:cJgen}
\<\cJ_XY,Z\>=\<Y,[X,Z]\>+\<X,[Y,Z]\>,
\end{equation}
for $X, Y, Z \in \g$. In the following lemma we collect some elementary, but useful, facts.

\newpage

\begin{lemma} \label{l:lin}
{\ }

\begin{enumerate}[label=\emph{(\alph*)},ref=\alph*]
  \item \label{it:trtr}
  For all $X \in \g$ we have $\Tr \cJ_X=\Tr \ad_X$. If $X$ is a stable stationary point of~\eqref{eq:de}, we must have $\Tr \ad_X \le 0$, and if, in addition, $\g$ is unimodular, all the eigenvalues of $\cJ_X$ must lie on the imaginary axis.

  \item \label{it:cJskew}
  If $\cJ_X$ is skew-symmetric or $\ad_X$ is skew-symmetric \emph{(}in particular, if $\cJ_X=0$ or if $X \in \z$\emph{)}, then $X$ is a stable stationary point of~\eqref{eq:de}.

  \item \label{it:cJsing}
  If $X$ is a stationary point of~\eqref{eq:de}, then $\cJ_XX=\cJ_X^tX=0$, so that $X^\perp$ is an invariant subspace of $\cJ_X$.

  \item \label{it:sigmas4}
  Suppose $\g$ is unimodular and $\dim \g = 4$. If $X$ is a stable stationary point of~\eqref{eq:de}, then $\rk \cJ_X \le 2$ and $\sigma_2(\cJ_X) \ge 0$. \end{enumerate}
\end{lemma}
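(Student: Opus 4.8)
The plan is to handle the four parts in order, drawing as much as possible from one algebraic observation. Write $\cJ_X=\ad_X^t+B_X$, where $B_XY:=\ad_Y^tX$. Formula~\eqref{eq:cJgen} gives $\<B_XY,Z\>=\<X,[Y,Z]\>$, which is \emph{skew-symmetric} in $Y,Z$; hence $B_X$ is a skew-symmetric operator for every $X$. Two consequences will be used repeatedly: $\Tr B_X=0$, and $\cJ_X$ is skew-symmetric if and only if $\ad_X$ is.

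For \eqref{it:trtr}, skew-symmetry of $B_X$ yields $\Tr\cJ_X=\Tr\ad_X^t=\Tr\ad_X$ (equivalently, in an orthonormal basis $\{e_i\}$ one has $\Tr B_X=\sum_i\<X,[e_i,e_i]\>=0$). The stability assertion is the standard linearisation principle: a Lyapunov stable equilibrium of $\dot Y=F(Y)$ cannot have $DF$ possessing an eigenvalue of positive real part, so every eigenvalue $\la$ of $\cJ_X$ satisfies $\Re\la\le0$. Since $\Tr\cJ_X$ equals the sum of the $\Re\la$ (non-real eigenvalues of a real matrix occurring in conjugate pairs), we obtain $\Tr\ad_X=\Tr\cJ_X\le0$. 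If $\g$ is unimodular then $\Tr\ad_X=0$, so a sum of nonpositive reals vanishes, forcing $\Re\la=0$ for all eigenvalues.

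For \eqref{it:cJskew}, by the observation above it suffices to assume $\ad_X$ skew-symmetric; then $\ad_X^tX=-[X,X]=0$, so $X$ is stationary. The point is that \eqref{eq:de} now admits two first integrals: $\|Y\|^2$ always, and $\<Y,X\>$ because $\tfrac{d}{dt}\<Y,X\>=\<\ad_Y^tY,X\>=\<Y,[Y,X]\>=-\<Y,\ad_XY\>=0$. Hence $\|Y-X\|^2=\|Y\|^2-2\<Y,X\>+\|X\|^2$ is constant along solutions, giving Lyapunov stability immediately. For \eqref{it:cJsing}, stationarity gives $\cJ_XX=2\ad_X^tX=0$; and for every $Z$, $\<\cJ_X^tX,Z\>=\<X,\cJ_XZ\>=\<\ad_XX,Z\>+\<[Z,X],X\>=0$, using $[X,X]=0$ and the stationarity identity $\<X,[X,Z]\>=0$. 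Thus $\cJ_X^tX=0$, so the image of $\cJ_X$ lies in $X^\perp$ and in particular $X^\perp$ is $\cJ_X$-invariant.

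Finally, \eqref{it:sigmas4} combines \eqref{it:trtr} and \eqref{it:cJsing}. By \eqref{it:cJsing} the orthogonal splitting $\g=\br X\oplus X^\perp$ is $\cJ_X$-invariant (as $\cJ_XX=0$), so the spectrum of $\cJ_X$ is $\{0\}$ together with that of the $3\times3$ operator $\cJ_X|_{X^\perp}$, whose eigenvalues all lie on the imaginary axis by \eqref{it:trtr}. A real $3\times3$ operator with purely imaginary spectrum must, since its characteristic polynomial is a real cubic, have a real and hence zero eigenvalue, forcing its eigenvalues to be $\{0,\ri b,-\ri b\}$. If $b\ne0$ these are distinct, so $\cJ_X|_{X^\perp}$ is diagonalisable with one-dimensional kernel and $\rk\cJ_X=2$; if $b=0$ then $\cJ_X|_{X^\perp}$ is nilpotent, hence of rank at most $2$. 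Either way $\rk\cJ_X\le2$, and reading $\sigma_2$ off the full spectrum $\{0,0,\ri b,-\ri b\}$ gives $\sigma_2(\cJ_X)=b^2\ge0$. The one delicate point is the rank bound, where a priori a Jordan block at the eigenvalue $0$ could inflate the rank to $3$; the reduction to $\cJ_X|_{X^\perp}$ sidesteps this, since a $3\times3$ matrix with the forced spectrum automatically has rank at most $2$.
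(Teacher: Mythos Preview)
Your proof is correct and follows essentially the same line as the paper's: the trace identity via the skew-symmetry of $Y\mapsto\ad_Y^tX$ (the paper phrases this as $\<\cJ_XY,Y\>=\<[X,Y],Y\>$), the Lyapunov argument via the first integral $\<Y,X\>$ and hence $\|Y-X\|^2$, and the reduction in part~\eqref{it:sigmas4} to the $3\times3$ block on $X^\perp$ with spectrum $\{0,\pm\ri b\}$. Your explicit naming of $B_X$ and the equivalence ``$\cJ_X$ skew $\Leftrightarrow$ $\ad_X$ skew'' is a tidy organizing device, but not a different argument.
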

\begin{proof}
  The first statement of assertion~\eqref{it:trtr} follows from the fact that, by \eqref{eq:cJgen}, $\<\cJ_XY,Y\>=\<[X,Y],Y\>$, for all $X,Y \in \g$. If for a stationary point $X$ we have either $\Tr \ad_X > 0$ or $\Tr \ad_X=0$ and the real part of at least one eigenvalue of $\cJ_X$ is nonzero, than $\cJ_X$ has an eigenvalue with a positive real part and hence $X$ is unstable.

  For assertion~\eqref{it:cJskew} we first note that if $\ad_X$ is skew-symmetric (in particular, if $X \in \z$), then $\cJ_X$ is skew-symmetric by \eqref{eq:cJgen}, and that if $\cJ_X$ is skew-symmetric, then $X$ is a stationary point by \eqref{eq:cJgen}. Moreover, in the latter case, the function $I_1(Y)=\<X,Y\>, \; Y \in \g$, is a first integral of~\eqref{eq:de}, as also is the function $Y \mapsto \|X\|^2+I_0(Y)-2I_1(Y)=\|Y-X\|^2$ and so $X$ is stable.

  Assertion~\eqref{it:cJsing} follows from \eqref{eq:cJgen}.

  Assertion~\eqref{it:sigmas4} follows from assertions~\eqref{it:trtr} and~\eqref{it:cJsing}: the restriction of $\cJ_X$ to $X^\perp$ must either be nilpotent, or have three simple eigenvalues $0, \pm \omega \ri$, where $\omega \ne 0$.
\end{proof}

\begin{remark} \label{r:dim2}
  If $\g$ is abelian, then all $X \in \g$ are stationary points and are stable (the right-hand side of \eqref{eq:de} is zero). If $\dim \g = 2$ and $\g$ is not abelian, we can choose an orthonormal basis $\{e_1,e_2\}$ for $(\g,\ip)$ relative to which $[e_1,e_2]=\alpha e_2, \, \a > 0$. Then \eqref{eq:de} takes the form $\dot{y}_1=-\a y_2^2, \, \dot{y}_2=\a y_1y_2$, for $Y=(y_1,y_2)^t$. A point $X=(x_1,x_2)^t$ is stationary when $x_2=0$, and it is easy to see that a stationary point $X=(x_1,0)^t$ is stable if and only if $x_1 \le 0$.
\end{remark}

\begin{remark}\label{r:normal}
  For the algebras in Theorems~\ref{t:3} and~\ref{t:4uni} there always exist nonzero stable points. It would be interesting to know if this is so for any metric Lie algebra. As to unstable points, they clearly always exist if the algebra is non-unimodular: if $X \ne 0$ is orthogonal to the unimodular ideal, it must be stationary, and then one of $\Tr \ad_{\pm X}$ is positive, hence the corresponding point is unstable by Lemma~\ref{l:lin}\eqref{it:trtr}. One obvious case when there are no unstable points is when all $X \in \g$ are stationary, that is, when $\ad_X$ is skew-symmetric for all $X \in \g$ (from Theorems~\ref{t:3} and~\ref{t:4uni} we see that this is the only possibility when $\dim \g \le 4$). In this case, $G$ (more precisely, $G/\{e\}$) is a geodesic orbit space, and it is well known that it is isometric to the Riemannian product of simple compact groups with bi-invariant metrics and the Euclidean space (recall that $G$ is simply connected).
\end{remark}

\subsection{Invariant submanifolds and Lyapunov functions}
\label{ss:Lyapunov}

The main tool which we use for proving stability/instability (apart from the linearisation $\cJ_X$) is invariant submanifolds (foliations). These are submanifolds consisting of trajectories of~\eqref{eq:de} (tangent to the vector field $Y \mapsto \ad_Y^tY$). There are several sources of invariant submanifolds. First of all, we have submanifolds defined by first integrals. There is a wealth of results in the literature on integrability of the geodesic flow on Lie groups and homogeneous spaces. We note that for a semisimple $\g$, we always have a family of first integrals constructed from Casimir operators (but note, in our cases, $\g$ is rarely semisimple); in the nilpotent case, see a recent paper~\cite{KOR} and the bibliography therein. Another source of invariant submanifold is \emph{coadjoint orbits}. In the presence of inner product, we can consider them as lying in $\g$: these are the integral submanifolds of the distribution $Y \mapsto \ad_\g^tY$. A simple criterion of stability based on the coadjoint orbits is given in~\cite[Th\'{e}or\`{e}me~4]{A1}. Note that another class of invariant manifolds, central manifolds, has limited applications for us, at least in the unimodular case, as by Lemma~\ref{l:lin}\eqref{it:trtr}, a necessary condition for stability of a point $X \in \g$ is that the central manifold is the whole of $\g$.

A simple argument which we will frequently employ is that if we have a collection of local first integrals $I_j$ in a neighbourhood of a stationary point $X$, such that $X$ is locally the only common point of their level sets passing through $X$, then $X$ is stable (this follows from the fact that $V(Y)=\sum_j (I_j(Y)-I_j(X))^2$ is a Lyapunov function).

\section{Proof of Theorem~\ref{t:3}}
\label{s:3}

We separately consider the cases when $\g$ is unimodular and non-unimodular.

\medskip

Let $(\g,\ip)$ be a three-dimensional, \emph{unimodular} metric Lie algebra. Then for any $X \in \g$ we have $\sigma_1(\cJ_X)=0$ by Lemma~\ref{l:lin}\eqref{it:trtr}, and so the condition of stability in assertion~\eqref{it:3X} is that a stationary point $X$ is stable if and only if either $\cJ_X=0$ or $\sigma_2(\cJ_X)>0$.

By \cite[Lemma~4.1]{Mil} we can choose an orthonormal basis $\{e_1,e_2,e_3\}$ for $(\g,\ip)$ relative to which the Lie brackets are given by $[e_i,e_j]=\la_k e_k$, where $(i,j,k)$ is a cyclic permutation of $(1,2,3)$, and where we can assume that $\la_1 \ge \la_2 \ge \la_3$ (changing the sign of $e_1$ if necessary). Equation~\eqref{eq:de} takes the form $\dot{y}_i=(\la_j-\la_k)y_jy_k$ for $Y=(y_1,y_2,y_3)^t$, where $(i,j,k)$ is a cyclic permutation of $(1,2,3)$.

If $\la_1 = \la_2 = \la_3$, then either $\g$ is abelian, or $(\g,\ip)$ is $\so(3)$ with a bi-invariant inner product; moreover, all the points of $\g$ are stable and stationary. Otherwise, there is a first integral $I_1(Y)=\sum_{i=1}^{3} \la_i y_i^2$ functionally independent of $I_0$. If $\la_1 > \la_2 > \la_3$, then the set of stationary points is the union of the three coordinate axes. Considering the intersections of the level sets of $I_0$ and $I_1$ (or computing the eigenvalues of $\cJ_X$) we find that a stationary point $X$ is unstable if $X=(0,x_2,0)^t, \, x_2 \ne 0$, and is stable otherwise. If $\la_1 > \la_2 = \la_3$, then the set of stationary points is the union of the line $x_2=x_3=0$ and the plane $x_1=0$. The trajectories of \eqref{eq:de} are circles lying in the planes $y_1=c \ne 0$ with the centre on the $y_1$-axis, and so the stationary points on the line $x_2=x_3=0$ are stable, and the points on the plane $x_1=0$ other than the origin are unstable. Similarly, if $\la_1 = \la_2 > \la_3$, then the set of stationary points is the union of the line $x_1=x_2=0$ consisting of stable points and the plane $x_3=0$ all of whose points other than the origin are unstable. This proves assertion~\eqref{it:3exist} in the unimodular case. Moreover, computing $\cJ_X$ we find that $\cJ_X = 0$ if and only if either $X=0$ or $\la_1 = \la_2 = \la_3$, and that $\sigma_2(\cJ_X)= x_1^2(\la_3-\la_1)(\la_2-\la_1) + x_2^2(\la_1-\la_2)(\la_3-\la_2) + x_3^2(\la_2-\la_3)(\la_1-\la_3)$ which is positive at a nonzero stationary point $X$ exactly when $X$ is stable; this proves assertion~\eqref{it:3X} in the unimodular case.

\medskip

Let $(\g,\ip)$ be a three-dimensional, \emph{non-unimodular} metric Lie algebra. The unimodular ideal $\ag \subset \g$ is two-dimensional and abelian. We choose an orthonormal basis $\{e_i\}$ for $(\g,\ip)$ in such a way that $e_1 \perp \ag$, and denote $A= (\ad_{e_1})_{|\ag}$. Note that $\Tr A \ne 0$.

We start with proving assertion~\eqref{it:3X}. Equation \eqref{eq:de} takes the form
\begin{equation}\label{eq:EA3nuni}
  \dot{y}_1= -\<Ay,y\>,\qquad \dot{y}=y_1 A^t y,
\end{equation}
for $Y=y_1e_1+y \in \g$, where $y \in \ag$. The set of stationary points is the union of the subspace $\Span(e_1, \Ker A^t)$ and the cone in $\ag$ given by $\<Ax,x\>=0$ (for $X=x \in \ag$). We have $\cJ_X= \left(\begin{smallmatrix} 0 & -x^t(A+A^t) \\ A^tx & x_1A^t \end{smallmatrix}\right)$ at a stationary point $X=x_1e_1+x, \, x \in \ag$. By Lemma~\ref{l:lin}\eqref{it:cJskew} we can assume that $\cJ_X \ne 0$ (and in particular, that $X \ne 0$). Furthermore, by Lemma~\ref{l:lin}\eqref{it:cJsing}, $\cJ_X$ is singular. If $\sigma_1(\cJ_X) > 0$ or $\sigma_2(\cJ_X) < 0$, then $\cJ_X$ has an eigenvalue with positive real part, and so $X$ is unstable. If $\sigma_1(\cJ_X) < 0 < \sigma_2(\cJ_X)$, then both eigenvalues of the restriction of $\cJ_X$ to the subspace tangent at $X$ to the level surface of $I_0$ have negative real part, and so $X$ is stable. To prove assertion~\eqref{it:3X}, it therefore remains to consider the cases when $\cJ_X \ne 0$ and either $\sigma_1(\cJ_X) < 0 = \sigma_2(\cJ_X)$ or $\sigma_1(\cJ_X) = 0 \le \sigma_2(\cJ_X)$. Note that we have $\sigma_1(\cJ_X)=\Tr \cJ_X = x_1 \Tr A$ and $\sigma_2(\cJ_X) = \<A(A+A^t)x,x\>+x_1^2 \det A$.

We first suppose that the stationary point $X$ is such that $A^tx \ne 0$. Then $x_1=0$ and $\sigma_1(\cJ_X)= 0$ and $\sigma_2(\cJ_X) = \<A(A+A^t)x,x\>$. The trajectory of the solution of~\eqref{eq:EA3nuni} with the initial condition $Y(0)=y_{1}^0 e_1 + y^0, \, y^0 \in \ag \setminus \{0\}$, lies on the cylindrical surface $\cp(Y(0))$ given by $(w,s) \mapsto we_1 + \exp(sA^t)y^0$ (the coadjoint orbit of $Y(0)$; see Section~\ref{ss:Lyapunov}). Taking $Y(0)=X$ and restricting the first integral $I_0(Y)=\|Y\|^2$ to the surface $\cp(X)$ we obtain the function $F(w,s)= w^2 + \phi_x(s)$, where $\phi_x(s)= \|\exp(sA^t)x\|^2$. We have the expansion $\phi_x(s) = \|x\|^2 + \sigma_2(\cJ_X) s^2 + \frac13\<A^2(A+3A^t)x,x\>s^3 + o(s^3)$. If $\sigma_2(\cJ_X) > 0$, then the equation $F(w,s)=F(0,0)$ has locally only one solution, $(w,s)=(0,0)$, and so the level hypersurface $I_0(Y)=I_0(X)$ has only one point in common with the cylinder $\cp(X)$ in a neighbourhood of $X$ which implies that $X$ is stable (see Section~\ref{ss:Lyapunov}). Suppose $\sigma_2(\cJ_X) = 0$. Choose a basis for $\ag$ in such a way that $x=\a e_3, \, \a \ne 0$. As $\<Ax,x\>=0, \, A^tx \ne 0$ and $\Tr A \ne 0$, we obtain $A= \left(\begin{smallmatrix} a & b \\ c & 0 \end{smallmatrix}\right)$, with $a, c \ne 0$. Then $\sigma_2(\cJ_X) = \<A(A+A^t)x,x\>= c(b+c)\a^2$, and so $b=-c$, and then $\<A^2(A+3A^t)x,x\>=2ac^2 \a^2 \ne 0$, so that $\phi_x(s) = \|x\|^2 + \frac23 ac^2 \a^2 s^3 + o(s^3)$. Then the solution to $F(w,s)=F(0,0)$ is locally given by $w=\pm \sqrt{(-\frac23 ac^2 \a^2) s^3} + \dots$ which, outside the point $(w,s)=(0,0)$, is the union of two open analytic curves $\gamma_j, \, j=1, 2$, whose closures contain $(0,0)$. Note that locally these curves contain no stationary points of \eqref{eq:EA3nuni}, as for all of them, we locally have $y_1 = w \ne 0$, and $y$ is close to $x$, so that $A^ty \ne 0$. Then each of $\gamma_1, \gamma_2$ is a trajectory of a solution of \eqref{eq:EA3nuni} which tends to $X$ for either $t \to \infty$ or $t \to -\infty$. But both the system~\eqref{eq:EA3nuni} and the stationary point $X$ are invariant with respect to the change of variables $(y_1,t) \leftrightarrow (-y_1,-t)$, and so one of these trajectories tends to $X$ when $t \to \infty$, and another one, when $t \to -\infty$. This implies that $X$ is unstable.

We now suppose that the stationary point $X$ (with $\cJ_X \ne 0$) is such that $A^tx = 0$. Then $\sigma_1(\cJ_X) = x_1 \Tr A$ and $\sigma_2(\cJ_X) = x_1^2 \det A$ (and from the above argument the only remaining cases are either $\sigma_1(\cJ_X) \le 0 = \sigma_2(\cJ_X)$, or $\sigma_1(\cJ_X) = 0 < \sigma_2(\cJ_X)$). If $\sigma_2(\cJ_X) > 0$, then $x_1 \ne 0$, and so $\sigma_1(\cJ_X) \ne 0$; we can therefore assume that $\sigma_2(\cJ_X) = 0$. If $\det A \ne 0$, then $x=0$ and $x_1=0$, a contradiction. So $\det A = 0$ and we can choose a basis for $\ag$ in such a way that $Ae_3=0$. Then $A= \left(\begin{smallmatrix} a & 0 \\ b & 0 \end{smallmatrix}\right), \, a \ne 0$, and $\cJ_X = (-x_2,x_1,0)^t(0,a,b)$. Equation~\eqref{eq:EA3nuni} has a first integral $I_1(Y)=y_3$, and the trajectories of non-stationary solutions lie on the circles; the tangent line to the circle $I_0(Y)=I_0(X), \, I_1(Y)=I_1(X)$ is spanned by the vector $(-x_2,x_1,0)^t$ (which is nonzero as $\cJ_X \ne 0$), and $\cJ_X (-x_2,x_1,0)^t= ax_1 (-x_2,x_1,0)^t= \sigma_1(\cJ_X) (-x_2,x_1,0)^t$. So if $\sigma_1(\cJ_X) < 0$, the point $X$ is stable. If $\sigma_1(\cJ_X) = 0$, then $x_1=0$ and so $X$ is the only stationary point on the circle $I_0(Y)=I_0(X), \, I_1(Y)=I_1(X)$, which implies that $X$ is unstable. This completes the proof of assertion~\eqref{it:3X} in the non-unimodular case.

For assertion~\eqref{it:3exist} in the non-unimodular case, we first note that the stationary point $X=(\Tr A) e_1$ is unstable, as $\sigma_1(\cJ_X)= (\Tr A)^2>0$ (note that $\g$ cannot be abelian or isomorphic to $\so(3)$). To prove the existence of nonzero stationary points, we consider three cases. If $\det A = 0$, then a nonzero vector from the kernel of $A$ belongs to the centre of $\g$ and so is stable by Lemma~\ref{l:lin}\eqref{it:cJskew}. If $\det A > 0$, then the stationary point $X=(-\Tr A) e_1$ is stable by assertion~\eqref{it:3X}\eqref{it:3s2J>0>s1J}, as $\sigma_1(\cJ_X)= -(\Tr A)^2<0$ and $\sigma_2(\cJ_X)= (\Tr A)^2 \det A > 0$. Suppose $\det A < 0$. Let $A=S+K$, where $S$ is symmetric and $K$ skew-symmetric. Note that the quadratic form $\<Ax,x\>=\<Sx,x\>$ on $\ag$ takes values of both signs, and so is indefinite. Let $x_{(1)}, x_{(2)} \in \ag$ be two linearly independent vectors satisfying $\<Ax_{(j)},x_{(j)}\>=0,\, j=1,2$. Then each of the points $X=x_{(j)}$ is stationary, with $\sigma_1(\cJ_X)=0$, and with $\sigma_2(\cJ_X)=\<S^2x_{(j)},x_{(j)}\>- 2 \<Sx_{(j)},Kx_{(j)}\>$. If $K = 0$, then $\sigma_2(\cJ_X) > 0$, and so both points $X=x_{(1)}$ and $X=x_{(2)}$ are stable by assertion~\eqref{it:3X}. If $K \ne 0$, then $K^2=\a \, \id_\ag$ for some $\a < 0$, and so from the fact that $\<Sx_{(j)},x_{(j)}\>=0$, we obtain $Sx_{(j)}=\la_j Kx_{(j)}$, for some $\la_j \in \br, \, j=1,2$. Note that $\la_1+\la_2=\Tr(\a^{-1}KS)=0$ and that $\la_1, \la_2 \ne 0$ as $S \ne 0$. Then for $X=x_{(j)}$ with $\la_j < 0$ we have $\sigma_2(\cJ_X) \ge -2\<Sx_{(j)},Kx_{(j)}\> = 2\a\la_j \|x_{(j)}\|^2 > 0$, and so $X$ is stable by assertion~\eqref{it:3X}\eqref{it:3s2J>0>s1J}. This proves assertion~\eqref{it:3exist} in the non-unimodular case, and completes the proof of Theorem~\ref{t:3}.

\section{Proof of Theorem~\ref{t:4uni}}
\label{s:4uni}

Let $\g$ be a unimodular Lie algebra of dimension $4$. We claim that either the centre $\z$ of $\g$ is nontrivial, or $\g$ contains a three-dimensional abelian ideal. To see this, one may either inspect the classification in \cite{Mub}, or use the following argument. Suppose $\z=0$. Then the Levi subalgebra must be trivial (as otherwise it is three-dimensional, and then the radical is the centre), and so $\g$ is solvable. The derived algebra $\g'$ is nilpotent and nontrivial (as $\g$ is not abelian). It cannot be $1$-dimensional, as then $\g' \subset \z$, by unimodularity. If $\dim \g'=2$, then $\g'$ is abelian. If $L$ is its (linear) complement, the operators $(\ad_X)_{|\g'}, \; X \in L$, commute and have trace zero, so for some nonzero $X \in L$ we have $[X,\g']=0$, and then $\Span(\g',X)$ is an abelian ideal. If $\dim \g' = 3$ and $\g'$ is not abelian, it must be the Heisenberg algebra. But the kernel of any unimodular derivation of the Heisenberg algebra contains its centre which implies that $\z$ is nontrivial.

The proof goes as follows. Depending on $d_\z=\dim \z$, we consider each of the three cases \eqref{it:tz>1}, \eqref{it:tz=1} and \eqref{it:tz=0} for $\g$ from assertion~\eqref{it:X} of the theorem, and for each of them establish the corresponding necessary and sufficient condition for stability of a stationary point $X$, and separately, the claim of assertion~\eqref{it:exist}.

\medskip

In case~\eqref{it:tz>1} we have $d_\z > 1$. If $d_\z =4$, the algebra $\g$ is abelian; then all the points $X \in \g$ are stationary and stable by Lemma~\ref{l:lin}\eqref{it:cJskew}. We cannot have $d_\z=3$, and if $d_\z=2$, there is an orthonormal basis $\{e_i\}$ for $(\g, \ip)$ such that the only nonzero bracket is $[e_1,e_2]=\a e_3, \, \a \ne 0$. Relative to this basis, equation~\eqref{eq:de} takes the form $\dot{y}_1=-\a y_2 y_3,\,\dot{y}_2=\a y_1 y_3,\, \dot{y}_3=\dot{y}_4=0$ for $Y=\sum_{i=1}^4 y_ie_i$. A point $X$ is stationary if it either belongs to the centre $\z=\Span(e_3,e_4)$, or if $x_3=0$ and one of $x_1, x_2$ is nonzero. In the former case, $X$ is stable by Lemma~\ref{l:lin}\eqref{it:cJskew}, and in the latter, the trajectory of the solution starting at the point $X+\ve e_3$, with a small $\ve \ne 0$, is a circle which does not remain close to $X$, so that such $X$ is unstable. This proves assertion~\eqref{it:X} in case~\eqref{it:tz>1}. Assertion~\eqref{it:exist} in this case also follows: we always have nonzero stable stationary points (lying in the centre) and always have unstable ones (unless $\g$ is abelian).

\medskip

In case~\eqref{it:tz=0}, let $\{e_i\}$ be an orthonormal basis for $(\g, \ip)$ such that $\ag=\Span(e_2,e_3,e_4)$. Denote $A=(\ad_{e_1})_{|\ag}$ (note that if we choose any vector in $\g \setminus \ag$ other than $e_1$ as in the statement of the assertion, the resulting $A$ will only differ by multiplication by a nonzero constant). Note that $\Tr A = 0$ by unimodularity, and that $\det A \ne 0$, as $\z = 0$. We need the following simple facts.

\begin{lemma} \label{l:AinR3}
  In the above notation, we have the following.
  \begin{enumerate}[label=\emph{(\roman*)},ref=\roman*]
    \item \label{it:Ano0}
    No eigenvalue of $A$ has zero real part.

    \item \label{it:Acone}
    The cone $\<Ax,x\>=0$ in $\ag$ is not a single point.

    \item \label{it:kfinite}
    The function $\phi_x(s) = \|\exp(sA^t)x\|^2$ is non-constant for $x \in \ag \setminus \{0\}$, and attains a \emph{(}positive, global\emph{)} minimum for all $x$ in an open, dense subset of $\ag$.
  \end{enumerate}
\end{lemma}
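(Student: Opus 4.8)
The plan is to derive all three assertions from the two constraints the hypotheses impose on the real $3 \times 3$ matrix $A$: $\Tr A = 0$ (unimodularity) and $\det A \ne 0$ (triviality of $\z$, since $\Ker A \subseteq \z$). I would first record the structural fact that, being a real $3\times 3$ matrix, $A$ has at least one real eigenvalue, so its spectrum consists either of three real numbers or of one real number together with a complex-conjugate pair. All three proofs then become exercises in eigenvalue bookkeeping plus one elementary ODE observation.

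For assertion~\eqref{it:Ano0}, none of the eigenvalues vanishes because their product is $\det A \ne 0$. If some eigenvalue were purely imaginary, say $\pm \om \ri$ with $\om \ne 0$, then the remaining (necessarily real) eigenvalue would be $\Tr A - (\om \ri) - (-\om \ri) = 0$, contradicting $\det A \ne 0$. Hence every eigenvalue has nonzero real part; moreover, since the eigenvalues sum to $0$ and none is $0$, at least one has positive and at least one has negative real part, a consequence I would reuse below. For assertion~\eqref{it:Acone}, I would pass to the symmetric part $S = \tfrac12(A + A^t)$, so that $\<Ax,x\> = \<Sx,x\>$. Here $\Tr S = \Tr A = 0$, and $S \ne 0$, since $S = 0$ would make $A$ skew-symmetric and force $\det A = 0$. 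A nonzero symmetric matrix of trace zero has eigenvalues of both signs, hence is indefinite, so its null cone in the $3$-dimensional space $\ag$ contains nonzero vectors and is therefore not a single point.

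The substantive claim is~\eqref{it:kfinite}, which I would deduce from the hyperbolic splitting furnished by~\eqref{it:Ano0}. Since $A^t$ has the same spectrum as $A$ and no eigenvalue on the imaginary axis, $\ag$ decomposes as $\ag = \ag^+ \oplus \ag^-$ into the (generalised) unstable and stable subspaces of $A^t$, both nontrivial by the sign remark in~\eqref{it:Ano0}. For non-constancy: if $\phi_x$ were constant, the orbit $s \mapsto \exp(sA^t)x$ would be bounded on all of $\br$, which forces $x \in \ag^-$ (boundedness as $s \to +\infty$) and $x \in \ag^+$ (boundedness as $s \to -\infty$); since $\ag^+ \cap \ag^- = 0$, this yields $x = 0$. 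For the minimum: if $x$ has nonzero components in both $\ag^+$ and $\ag^-$, then $\phi_x(s) \to +\infty$ as $s \to \pm\infty$, so the continuous function $\phi_x$ attains a global minimum, which is positive because $\exp(sA^t)$ is invertible. The set of such $x$ is the complement of $\ag^+ \cup \ag^-$, a union of two proper subspaces, hence open and dense.

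I expect the only genuine subtlety to lie in~\eqref{it:kfinite}, namely the identification of boundedness of the orbit with membership in the appropriate invariant subspace; this rests precisely on the absence of a centre subspace, i.e. on~\eqref{it:Ano0}. A minor point worth verifying cleanly is that both $\ag^{\pm}$ are proper subspaces, so that their union is nowhere dense; this again follows from the eigenvalues summing to zero while all having nonzero real part. Everything else reduces to routine $3 \times 3$ linear algebra.
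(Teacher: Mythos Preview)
Your proof is correct and, for parts~\eqref{it:Ano0} and~\eqref{it:kfinite}, essentially matches the paper's argument: the paper also rules out purely imaginary eigenvalues via $\Tr A=0$ and $\det A\ne 0$, and also proves~\eqref{it:kfinite} by decomposing $\ag$ into $A^t$-invariant pieces and using that the growth rate along each piece is governed by a nonzero real part (the paper phrases this via individual real Jordan subspaces $\ag_{\la_i}$ rather than the coarser stable/unstable splitting $\ag^+\oplus\ag^-$, but the mechanism is identical).

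The one genuine difference is in~\eqref{it:Acone}. The paper shows $x\mapsto\<Ax,x\>$ takes both signs by a case split on the spectrum of $A$: if all eigenvalues are real it evaluates the form on eigenvectors of $A$, and if there is a complex-conjugate pair $\a\pm\ri\b$ it computes $\<Az,z\>=-2\a\|z\|^2$ on the remaining real eigenvector $z$ and $\<Ax,x\>+\<Ay,y\>=\a(\|x\|^2+\|y\|^2)$ on the real and imaginary parts of a complex eigenvector. Your route via the symmetric part $S=\tfrac12(A+A^t)$ is shorter and avoids the case analysis: once you observe that $S\ne 0$ (else $A$ is skew and $\det A=0$ in odd dimension) and $\Tr S=0$, indefiniteness is immediate. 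The paper's argument has the mild advantage of exhibiting explicit vectors on which the form has opposite signs, but yours is the cleaner proof of the stated claim.
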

\begin{proof}
  For assertion~\eqref{it:Ano0}, we note that if $A$ has a nonzero imaginary eigenvalue, then from $\Tr A = 0$, it also has a zero eigenvalue, which contradicts $\det A \ne 0$.

  To prove assertion~\eqref{it:Acone} it suffices to show that the function $x \mapsto \<Ax,x\>$ on $\ag$ takes values of both signs. If all three eigenvalues of $A$ are real, we take for $x$ eigenvectors of $A$ and use the facts that $\Tr A = 0$ and $\det A \ne 0$. If $A$ has non-real eigenvalues $\a + \ri \b$ with corresponding eigenvector $x+\ri y$, then the remaining eigenvalue is $-2\a$ with corresponding eigenvector $z$, where $x, y, z \in \ag$; then $\<Az,z\>=-2\a \|z\|^2$ and $\<Ax,x\>+\<Ay,y\>^2=\a(\|x\|^2+\|y\|^2)$ (note that $\a \ne 0$ by assertion~\eqref{it:Ano0}).

  For assertion~\eqref{it:kfinite}, we consider the decomposition of $\ag$ into the direct sum of the (real) Jordan subspaces $\ag_{\la_i}$ of $A$, where $\la_i=\a_i + \ri \b_i$ are the eigenvalues of $A$. Let $x=\sum_i x_{(i)},\; x_{(i)} \in \ag_{\la_i}$, be the corresponding (linear) decomposition of $x$. If $x_{(i)} \ne 0$ is a term in this decomposition with the maximal $|\a_i|$, then $\phi_x(s)$ grows at least as $e^{2s\a_i}\|x_{(i)}\|^2$ when $s\a_i \to \infty$. As $\a_i \ne 0$ by assertion~\eqref{it:Ano0}, this proves the first statement of assertion~\eqref{it:kfinite}. To prove the second statement we choose $x$ outside the union $\cup_i \ag_{\la_i}$ (note that $A$ cannot be a single Jordan cell by assertion~\eqref{it:Ano0} and as $\Tr A = 0$). Then by the above argument, $\phi_x(s) \to \infty$ for $s \to \pm \infty$, and the claim follows as $\phi_x(s) > 0$ for $x \ne 0$.
\end{proof}

Equation \eqref{eq:de} takes the form
\begin{equation}\label{eq:EAz0}
  \dot{y}_1= -\<Ay,y\>,\qquad \dot{y}=y_1 A^t y,
\end{equation}
for $Y=y_1e_1+y \in \g$, where $y \in \ag$. As $\det A \ne 0$, the set of stationary points is the union of the axis $\br e_1$ and the cone in $\ag$ given by $\<Ax,x\>=0, \; x \in \ag$ (note that this cone is nontrivial by Lemma~\ref{l:AinR3}\eqref{it:Acone}). The points $X=x_1e_1, \, x_1 \ne 0$, are unstable by Lemma~\ref{l:lin}\eqref{it:trtr}: we have $\cJ_X= \left(\begin{smallmatrix} 0 & 0 \\ 0 & A^t \end{smallmatrix}\right)$, and so not all eigenvalues of $\cJ_X$ (in fact, not a single one) have zero real part by Lemma~\ref{l:AinR3}\eqref{it:Ano0}.

Let $X = x \in \ag$ be a stationary point. The point $X=0$ is stable, so we can assume $x \ne 0$. The trajectory of the solution of \eqref{eq:EAz0} with the initial condition $Y(0)=y_{1}^0 e_1 + y^0, \, y^0 \in \ag \setminus \{0\}$, lies on the cylindrical surface $\cp(Y(0))$ given by $(w,s) \mapsto we_1 + \exp(sA^t)y^0$ (the coadjoint orbit of $Y(0)$). Taking $Y(0)=X$ and restricting the first integral $I_0(Y)=\|Y\|^2$ to the surface $\cp(X)$ we obtain the function $F(w,s)= w^2 + \phi_x(s)$. Consider the expansion $\phi_x(s) = \|x\|^2 + a_k s^k + o(s^k)$, where $k > 0$ and $a_k \ne 0$ (note that such a $k$ exists by Lemma~\ref{l:AinR3}\eqref{it:kfinite}). If $k$ is even and $a_k > 0$, then the equation $F(w,s)=F(0,0)$ has locally only one solution, $(w,s)=(0,0)$, and so the level hypersurface $I_0(Y)=I_0(X)$ has only one point in common with the cylinder $\cp(X)$ in a neighbourhood of $X$ which implies that $X$ is stable (see Section~\ref{ss:Lyapunov}). Suppose that either $k$ is odd, or $a_k < 0$. Then the solution to $F(w,s)=F(0,0)$ is locally given by $w=\pm \sqrt{-a_ks^k} + \dots$ which, outside the point $(w,s)=(0,0)$, is the union of open analytic curves $\gamma_j, \, j=1, \dots, 2m,\, m \in \{1,2\}$, whose closures contain $(0,0)$. Note that locally these curves contain no stationary points of \eqref{eq:EAz0}, as for all of them, we locally have $y_1 = w \ne 0$, and so each of them is a trajectory of a solution of \eqref{eq:EAz0} which tends to $X$ for either $t \to \infty$ or $t \to -\infty$. But both the system~\eqref{eq:EAz0} and the stationary point $X$ are invariant with respect to the change of variables $(y_1,t) \leftrightarrow (-y_1,-t)$, and so $m$ of these $2m$ trajectories tend to $X$ when $t \to \infty$, and another $m$, when $t \to -\infty$. This implies that $X$ is unstable which completes the proof of assertion~\eqref{it:X} in case~\eqref{it:tz=0}.

\begin{remark} \label{r:A^n}
  The coefficients $c_N$ of the Taylor expansion $\phi_x(s) = \sum_{N=0}^\infty c_N s^N$ are given by $c_N=\sum_{j=0}^N (j!(N-j)!)^{-1} \<A^{N-j} (A^t)^j x, x\>$, and so $c_0=\|x\|^2, \, c_1=2\<Ax,x\>=0$ (as $X$ is a stationary point), $c_2=\<(A^2+AA^t)x,x\>$, and so on. One can show (by a direct computation) that the condition of stability in assertion~\eqref{it:tz=0} is, in fact, finite: if $c_N=0$ for $N \le 5$, then $c_6$ is always positive. Such order of tangency between the sphere $I_0(Y)=I_0(X)$ and the coadjoint orbit may indeed occur: for example, for the matrix $A=\left(\begin{smallmatrix} 1 & 1 & 2 \\ -3 & -1 & 4 \\ -2 & 0 & 0  \end{smallmatrix}\right)$ and $x=(0,0,1)^t$, we have $\phi_x(s) = 1 + \frac85 s^6 + o(s^6)$.

  We also note that for a nonzero stationary point $X \in \ag$ one has $\cJ_X= \left(\begin{smallmatrix} 0 & -x^t (A+A^t) \\ A^tx & 0 \end{smallmatrix} \right)$, and so $\rk(\cJ_X) \le 2$ and $\sigma_2(\cJ_X)=c_2$, so the ``first" condition which may imply stability, $c_2 > 0$, is equivalent to $\sigma_2(\cJ_X) > 0$ (cf. Lemma~\ref{l:lin}\eqref{it:sigmas4}).
\end{remark}

For assertion~\eqref{it:exist} in case~\eqref{it:tz=0} we note that unstable stationary points always exist (nonzero multiples of $e_1$). To prove the existence of nonzero stable points, take an arbitrary $y \in \ag$ from the open, dense set as in Lemma~\ref{l:AinR3}\eqref{it:kfinite}. Then $\phi_y(s)$ attains a positive, global minimum at some point $s=s_0$; this point is an isolated point of the set $\{s: \phi_y(s)=\phi_y(s_0)\}$, as $\phi_y$ is analytic and non-constant. Denote $x=\exp(s_0A^t)y$ (note that $x \ne 0$) and $X=x$. Then $X \in \cp(Y)$, where $Y=y$, and the restriction of the first integral $I_0$ to the cylinder $\cp(Y)\,(=\cp(X))$ has a strict, isolated minimum at $X$; geometrically, $X$ is the first point of tangency of the family of expanding spheres centred at the origin with the coadjoint orbit $\cp(Y)$. This implies that $X$ is a nonzero stable stationary point of \eqref{eq:EAz0} (see Section~\ref{ss:Lyapunov}), which completes the proof of the theorem in case~\eqref{it:tz=0}.

There are clear similarities between the proof of this case and the proof of Theorem~\ref{t:3} in the non-unimodular case; this is further explored in a paper under preparation.

\medskip

In case~\eqref{it:tz=1}, choose an orthonormal basis $\{e_i\}$ for $(\g, \ip)$ such that $\z=\br e_4$, and denote $\m=\Span(e_1,e_2,e_3)$. Then the Lie brackets on $\g$ are given by
\begin{equation} \label{eq:brtz1}
  [e_4, x]=0, \qquad [x,y]=S(x \times y) + \<l, x \times y\>e_4, \qquad x, y \in \m,
\end{equation}
where $S$ is a symmetric operator on $\m, \, l \in \m$ and $\times$ is the cross product on $\m$ (the fact that $S$ is symmetric follows from unimodularity). To avoid having a bigger centre, we additionally require that $S \ne 0$, and, if $\rk S = 1$, that $l$ does not lie in the image of $S$.

Equation \eqref{eq:de} takes the form (for $Y=y+y_4e_4 \in \g$, where $y \in \m$)
\begin{equation}\label{eq:EAz1}
  \dot{y}=(Sy+y_4 l) \times y,\qquad \dot{y}_4= 0.
\end{equation}
The set of stationary points is the union of the axis $\br e_4$ (the centre) and the set of points
\begin{equation}\label{eq:statz1}
X=x+x_4e_4, \quad x \in \m \setminus \{0\}, \quad \text{such that } (S-\mu \,\id_\m)\,x=-x_4 l, \quad \text{for some } \mu \in \br
\end{equation}
(note that such a $\mu$ is uniquely determined by $x \ne 0$). The points $X=x_4e_4$ are stable by Lemma~\ref{l:lin}\eqref{it:cJskew}, and so we will be interested in the stationary points given by \eqref{eq:statz1}. We may again consider the restriction of $I_0$ to coadjoint orbits, as in the previous case, but it is easier to use the fact that the system~\eqref{eq:EAz1} has three explicit polynomial first integrals: $I_1(Y)=y_4, \; I_2(Y)=\<Sy,y\>+2y_4\<l,y\>$ and $I_0$ (where $Y=y+y_4e_4, \, y \in \m$), and so ``in principle", we know all the trajectories.

Let $X$ be a stationary point given by~\eqref{eq:EAz1} and let $\G_X=\{Y \in \g \, : \, I_i(Y)=I_i(X), \, i=0,1,2\}$. If $Y=X+U \in \G_X$, where $U=u+u_4e_4, \, u \in \m$, we obtain $u_4=0, \, \|u\|^2 + 2\<u,x\>=0$, and then by~\eqref{eq:EAz1}, $\<Su,u\>=-2\<Sx+2x_4l,u\>=-2\mu\<x,u\>=\mu \|u\|^2$. Denote $Q=S-\mu \, \id_\m$. Then $u \in \m$ belongs to the intersection of the ellipsoid $E_x$ given by $\|u\|^2 + 2\<u,x\>=0$ and the cone $C_x$ given by $\<Qu,u\>=0$. The tangent plane to $E_x$ at $x$ is $x^\perp$, and so if the restriction $\psi_x$ of the quadratic form $v \mapsto \<Qv,v\>$ to $x^\perp$ is strictly definite (either positive or negative), then $E_x \cap C_x$ locally consists of a single point $u=0$, and so for some neighbourhood $\mU(X)$ of $X$ we have $\G_X \cap \mU(X)=X$, which implies that $X$ is stable. This condition for $\psi_x$ can be expressed in terms of $\cJ_X$. From~\eqref{eq:EAz1} we have $\cJ_XY=(Qy+y_4 l) \times x$ for $Y=y+y_4e_4, \, y \in \m$, and so $\rk \cJ_X \le 2$ and moreover, $\sigma_2(\cJ_X) = \|x\|^2 \det \psi_x$ (by an easy calculation choosing a basis for $\m$ in such a way that $x$ is a multiple of $e_3$). We deduce that if $\sigma_2(\cJ_X) > 0$, then $X$ is stable (as per condition~\eqref{it:tz=1}\eqref{it:s2J>0}). By Lemma~\ref{l:lin}\eqref{it:sigmas4} it remains to consider the case $\sigma_2(\cJ_X) = 0$.

By Lemma~\ref{l:lin}\eqref{it:cJskew}, $X$ is stable when $\cJ_X=0$, so we suppose that $\sigma_2(\cJ_X) = 0$ and $\cJ_X \ne 0$. Specifying the basis $e_1, e_2, e_3$ for $\m$ in such a way that $x=\a e_3,\, \a \ne 0$, and that $e_1$ lies in the kernel of the matrix of $\psi_x$ we obtain
\begin{equation} \label{eq:QcJ}
  Q=\begin{pmatrix} 0 & 0 & q_{13} \\ 0 & q_{22} & q_{23} \\ q_{13} & q_{23} & q_{33} \end{pmatrix}, \quad
  \cJ_X=\a \begin{pmatrix} 0 & q_{22} & q_{23} & l_2\\ 0 & 0 & -q_{13} & - l_1\\ 0 & 0 & 0 & 0 \\ 0 & 0 & 0 & 0 \end{pmatrix}, \quad \text{with \ }\a Q e_3 = -x_4 l,
\end{equation}
where the last equation follows from~\eqref{eq:statz1}.

Suppose $\det Q = 0$ and $\sigma_2(Q)>0$. Then the cone $C_x$ given by $\<Qu,u\>=0$ is a line, and so its intersection with $E_x$ is locally a single point $u=0$, which implies that $X$ is stable. These conditions on $Q$ are equivalent to conditions on $\cJ_X$ given in \eqref{it:tz=1}\eqref{it:Jnilp}. Indeed, we have $\cJ_{e_4}Y=(l \times y, 0)^t$, and so from~\eqref{eq:QcJ}, $\<X,e_4\> \Tr (\cJ_X \cJ_{e_4})=-\a^2\sigma_2(Q)$. If $\det Q = 0$ and $\sigma_2(Q)>0$, then $q_{13}=0, \, q_{33} \ne 0$, and so from the last equation of~\eqref{eq:QcJ} we obtain $l_1=0$, so $\rk \cJ_X = 1$. Conversely, if $\<X,e_4\> \Tr (\cJ_X \cJ_{e_4}) < 0$, then $\sigma_2(Q) > 0$, and so $q_{22} \ne 0$ which by $\rk \cJ_X=1$ implies $q_{13}=l_1=0$ from \eqref{eq:QcJ}.

It remains to show that in all the other cases, the point $X$ is unstable. Suppose $Y=X+U \in \G_X$ is a stationary point. Then from the above we have $u_4=0, \; \<Qu,u\>=0$ and $2\<x,u\>+\|u\|^2=0$, and $Sy+x_4l=\mu' y$ for some $\mu' \in \br$, which gives $Qu=(\mu'-\mu)(x+u)$, and then $0=(\mu'-\mu)\<x+u,u\>=\frac12(\mu'-\mu)\|u\|^2$. If $u=0$ we get $Y=X$, and so $\G_X$ contains stationary points $Y=X+U$ different from $X$ only when $Qu = 0$, with $u \ne 0$ and $2\<x,u\>+\|u\|^2=0$.

Now if $\det Q \ne 0$, then from~\eqref{eq:QcJ}, one of the generatrices of the cone $C_x$, the axis $\br e_1$, lies in the tangent plane $u_3=0$ to the ellipsoid $E_x$ at $u=0$, and all the other generatrices intersect $E_x$ at $y=0$ and at one other point, so that $C_x \cap E_x$ (and hence $\G_X$) is homeomorphic to a circle. From the previous paragraph, $X$ is the only stationary point on $\G_X$, and so $\G_X \setminus \{X\}$ is a single trajectory, which implies that $X$ is unstable.

Next suppose that $\det Q = 0$ and $\sigma_2(Q) \le 0$. If $q_{22}=0$ in \eqref{eq:QcJ}, we can further specify $e_1, e_2$ to make $q_{13}$ zero. So we can always assume that $q_{13}=0$, and then $\sigma_2(Q) =q_{22}q_{33}-q_{23}^2 \le 0$. If this inequality is strict, then the cone $C_x$ is the union of two planes, and so $\G_X$ is either a circle passing through $X$ or a union of two circles which touch at $X$. As the equations $Qu=0,\, 2\<x,u\>+\|u\|^2=0$ imply $u=0$, $\G_X$ contains no stationary points other than $X$, and so every connected component of $\G_X \setminus \{X\}$ is a single trajectory, which again implies that $X$ is unstable.

The last case to consider is when $q_{13} = 0$ and $\sigma_2(Q) = q_{22}q_{33}-q_{23}^2=0$. If $q_{22}=0$, then $q_{23}=0$ and so $x_4 l = - \a (0,0,q_{33})^t$ by~\eqref{eq:QcJ}. As $\cJ_X \ne 0$, at least one of $l_1, l_2$ must be nonzero, and so we get $x_4=0$ and $Q=0$. Then~\eqref{eq:EAz1} takes the form $\dot{y}= y_4 l \times y,\; \dot{y}_4= 0$, and the trajectory of its solution starting at the point $Y = \a e_3 + \ve e_4$ close to $X = \a e_3$ is a circle which does not remain close to $X$; so $X$ is unstable. We can therefore assume that $q_{22} \ne 0$, and so $Qy= c (\b y_2 + \gamma y_3)(0,\b,\gamma)^t$ for some $\b, \gamma \in \br, \, c=\pm1, \, \b \ne 0$. From~\eqref{eq:QcJ} we obtain $x_4 l = - \a c \gamma (0,\b,\gamma)^t$. If $x_4=0$, then $\gamma=0$ and the trajectory of the solution to~\eqref{eq:EAz1} starting at the point $Y = \ve e_2 + \a e_3$ close to $X = \a e_3$ is a circle which does not remain close to $X$, which implies that $X$ is unstable. We can therefore assume that $x_4 \ne 0$, and so  $l = - \a c \gamma x_4^{-1} (0,\b,\gamma)^t$. Then~\eqref{eq:EAz1} takes the form  $\dot{y}= c (\b y_2 + \gamma y_3 - \a \gamma x_4^{-1} y_4) (0,\b,\gamma)^t \times y,\; \dot{y}_4= 0$. It has a first integral $I_3(Y)= \b y_2 + \gamma y_3$. For a small $\ve \ne 0$, the set $C'$ of points $Y$ such that $I_0(Y)=I_0(X), \, I_1(Y)=I_1(X), \, I_3(Y)=I_3(X)+\ve$ is a circle. Stationary points lying on $C'$ must satisfy $(0, \b, \gamma)^t \times y =0$, and an easy calculation shows that $C'$ contains no stationary points, unless $\ve^2 +2 \a \gamma \ve -\a^2\b^2=0$ which we can always avoid by a choice of $\ve$. It follows that $C'$ is the trajectory of a single periodic solution; it passes close to $X$, but does not remain close $X$, and so $X$ is unstable.

This proves assertion~\eqref{it:X} in case~\eqref{it:tz=1}.

For assertion~\eqref{it:exist} in case~\eqref{it:tz=1}, we note that nonzero stable stationary points always exist (nonzero multiples of $e_4$, by Lemma~\ref{l:lin}\eqref{it:cJskew}).

To prove the existence of unstable points, we choose a basis $\{e_1,e_2,e_3\}$ of eigenvectors of $S$ for $\m$. We can assume that the corresponding eigenvalues $\la_i$ satisfy $\la_1 \ge \la_2 \ge \la_3$. Then by \eqref{eq:statz1}, a point $X=x+x_4e_4, \, x \in \m, \, x \ne 0$, is stationary if for some $\mu \in \br$ it satisfies the equations $(\la_i - \mu) x_i = -x_4 l_i$ for $i=1,2,3$. At such a point $X$ we have $\sigma_2(\cJ_X)= (\la_1-\mu)(\la_2-\mu)x_3^2 + (\la_3-\mu)(\la_1-\mu)x_2^2 + (\la_2-\mu)(\la_3-\mu)x_1^2$. If $\la_1 > \la_2 > \la_3$, we take $X=e_2, \, \mu = \la_2$. Then $\sigma_2(\cJ_X) < 0$ and so $X$ is unstable by~\eqref{it:X}. Suppose $\la_1 = \la_2 > \la_3$; the case $\la_1 > \la_2 = \la_3$ reduces to this one by changing the sign of $S$ (changing $e_3$ to $-e_3$). We take $x_4=1$ and $x_i = -(\la_i - \mu)^{-1} x_4 l_i$ for $i=1,2,3$, with $\mu \ne \la_1,\la_3$. Then $\sigma_2(\cJ_X)= (l_1^2+l_2^2)a + l_3^2 a^{-2}$, where $a=(\la_1-\mu)^{-1}(\la_3-\mu)$. If at least one of $l_1, l_2$ is nonzero, we have $\sigma_2(\cJ_X) < 0$ when $a \to -\infty$ (that is, when $\mu \to \la_1-$), so such stationary points $X$ are unstable. Suppose $l_1=l_2=0$. Take $\mu=\la_1, \, x_3=x_4=0$, and $x_1, x_2$ nonzero. We have $\cJ_X=(-x_2,x_1,0,0)^t (0,0,\la_3-\la_1,l_3)$, so that $\rk \cJ_X=0$ and $\sigma_2(\cJ_X)=0$. Then $\cJ_{e_4}Y = l_3 e_3 \times y$, and so $\Tr (\cJ_X \cJ_Z) = 0$ which implies that such points $X$ are unstable by~\eqref{it:tz=1}\eqref{it:Jnilp}.

It remains to consider the case when $\la_1 = \la_2 = \la_3$. Then~\eqref{eq:EAz1} takes the form $\dot{y} = y_4 l \times y,\, \dot{y_4} = 0$. If $l = 0$, all the points are trivially stable. If $l \ne 0$, any point $X =x \in \m$ is stationary; taking $x$ non-proportional to $l$ we get that $X$ is unstable, as the trajectory of the solution starting at $Y=X+\ve e_4$, with a small positive $\ve$, is a circle which does not remain close to $X$.

So we always have unstable points, unless $S$ is scalar and $l=0$, which by \eqref{eq:brtz1} gives that either $\g$ is abelian or $\g = \so(3) \oplus \br$ and $\ip$ is bi-invariant inner product.


\end{document}